\documentclass{amsart}

\usepackage{mathrsfs} 

\usepackage{amsthm,amsmath,amssymb,amsfonts}


\usepackage{amscd}
\usepackage{amsmath}
\usepackage{mathtools}
\usepackage{bm}
\usepackage{amssymb}
\usepackage{graphicx}
\usepackage{psfrag}
\usepackage{verbatim}
\usepackage{tikz}
\usetikzlibrary{quotes}
\usetikzlibrary{topaths}
\usetikzlibrary{shapes,arrows,cd}
\usepackage{adjustbox}

\usepackage{enumerate}

\setlength{\topmargin}{0in}

\newtheorem{thm}{Theorem}[section]

\newtheorem{prop}[thm]{Proposition}

\theoremstyle{definition}

\theoremstyle{remark}

\theoremstyle{plain}

\numberwithin{equation}{section}

\theoremstyle{plain}

\addtolength{\topmargin}{0.5cm}

\begin{document}


\title{Four-dimensional Zero-Hopf Bifurcation for a Lorenz-Haken system }
\author[S. Renteria and P. Suarez]
{Sonia Renteria $^{1}$ and Pedro Suárez $^2$}

\address{$^1$ IME-USP Sao Paulo, Rua do Matao 1010, Cidade Universitaria, Sao Paulo, Brazil}
\email{srentalv@ime.usp.br}

\address{$^2$ Av. Professor Mello Moraes, 1235 - Butantã, São Paulo, Brazil}
\email{psuar86@gmail.com}

%


\subjclass[2010]{34C23, 34C25, 37G10}


\begin{abstract}
In this work we study the periodic orbits which bifurcate from a zero-Hopf bifurcations that a Lorenz-Haken system in $\mathbb{R}^4$  can exhibit. The main tool used is the averaging theory.
\end{abstract}

\keywords{Zero-Hopf Bifurcation, Periodic solutions, Averaging theory}


\maketitle
\vspace{-0.5cm}


\section{Introduction and statement of the main results}

The Lorenz–Haken
equation named after the fluid dynamist Lorenz and
laser theorist Haken \cite{haken1975analogy}  describe the dynamics of a homogeneously broadened gain medium
in an unidirectional ring cavity. In the notation given in the Reference \cite{van1997nonlinear}, the Lorenz-Haken
equations is given by

\begin{equation}\label{ss0}
  \begin{aligned}
\dot{x} &= -\sigma ( x - y ) + i q x \vert x \vert ^2, \\
\dot{y} &= -(1 - i \delta ) y + (r - z) x , \\
\dot{z} &= -b z  + \mathcal{\mbox{Re}}(xy), \\
  \end{aligned}
\end{equation}
where $x,y$ and $z$ are complex variables, and $\sigma, b, q, r, \delta$ are the real parameters. In 2019, Hayder Natiq \cite{natiq2019dynamics} derived a new 4D chaotic laser system with three equilibrium points from (\ref{ss0}), 
since both $x$ and $z$ can be chosen to be real and $y$ a complex variable.

%

\smallskip
In this paper, we study a four-dimensional system of differential equations which is a generalization of the system introduced in \cite{natiq2019dynamics}.  We want to study the periodic orbits of the Lorenz-Haken systems of $\mathbb{R}^4$  with five parameters, in  which bifurcate in the zero-Hopf bifurcations of the singular points given by

\begin{equation}\label{s1}
  \begin{aligned}
\dot{x} &= a ( y - x ), \\
\dot{y} &= - c y - d z + (e - w) x , \\
\dot{z} &= d y - c z , \\
\dot{w} &= - b w + x y  ,
  \end{aligned}
\end{equation}
where $x, y, z, w$ are state variables and $a$, $b$, $c$, $d$ and $e$ are real parameters.

\smallskip

%


In the first instance we are going to compute the equilibrium points of Lorenz–Haken system (\ref{s1}). 

\begin{prop} \label{prr1}
Let  $\Delta= \dfrac{\big(  e c -c^{2} - d^{2}  \big)}{c} $ and  $c \neq 0$. The following statements are true:
\begin{enumerate}
\item If $\Delta\leq 0$ and  $b \neq 0$, system (\ref{s1}) has an unique equilibrium point $\mathtt{p_{0}}=(0,0,0,0)$.
\item If $\Delta>0$  and  $b \neq 0$, we have two equilibrium points
\begin{equation*}
  \begin{aligned}
\mathtt{p_{\pm}} &= \Bigg( \pm \sqrt{b \Delta}, \pm \sqrt{b \Delta}, \pm \dfrac{\sqrt{b \Delta}}{c} , \Delta \Bigg).
 \end{aligned}
\end{equation*}
\item If $b = 0$ and $\Delta \neq  0$ we has a straight line of equilibria
\begin{equation*}
  \begin{aligned}
\mathtt{p} &= \Bigg( 0, 0, 0, \Delta \Bigg). 
  \end{aligned}
\end{equation*}

\end{enumerate}
\end{prop}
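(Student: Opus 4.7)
The proof is a direct computation: set the right-hand side of (\ref{s1}) equal to zero and analyze the resulting algebraic system according to the parameters. I assume throughout that $a\neq 0$, which appears implicit in the setting.

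First, I would exploit the two linear equations $\dot x=0$ and $\dot z=0$ to eliminate two coordinates. From $a(y-x)=0$ we get $y=x$, and from $dy-cz=0$ together with $c\neq 0$ we get $z=dx/c$. Substituting into $\dot y=0$ yields
\begin{equation*}
-cx-\frac{d^{2}}{c}x+(e-w)x=\frac{x}{c}\bigl(ec-c^{2}-d^{2}-cw\bigr)=x(\Delta-w)=0,
\end{equation*}
so the only possibilities are $x=0$ or $w=\Delta$. The last equation $\dot w=0$ then reads $bw=xy=x^{2}$, which couples $w$ and $x$ in a way that is immediately decided by the sign and vanishing of $b$.

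Next I would run the case analysis. If $b\neq 0$ and $x=0$, then $w=0$ and we obtain $\mathtt{p_{0}}=(0,0,0,0)$. If instead $w=\Delta$, the condition $b\Delta=x^{2}$ forces $b\Delta\geq 0$ for a real solution; hence when $\Delta\leq 0$ (with $b>0$ implicit) the only equilibrium is $\mathtt{p_{0}}$, while for $\Delta>0$ we obtain the two real roots $x=\pm\sqrt{b\Delta}$, and using $y=x$, $z=dx/c$, $w=\Delta$ we recover the points $\mathtt{p_{\pm}}$ stated in item (2). If $b=0$, the equation $\dot w=0$ collapses to $xy=x^{2}=0$, forcing $x=y=z=0$, while $w$ becomes free and the condition $x(\Delta-w)=0$ is automatically satisfied; this produces the affine line of equilibria $\{(0,0,0,w):w\in\R\}$ claimed in item (3).

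The computation is routine; the only subtlety is the bookkeeping of the cases. I would present it as a single chain of reductions (use $\dot x=0$ and $\dot z=0$ to solve for $y,z$; use $\dot y=0$ to obtain the dichotomy $x=0$ or $w=\Delta$; close with $\dot w=0$) and then run the three cases in the order stated in Proposition~\ref{prr1}, being careful to flag that the existence of $\mathtt{p_{\pm}}$ as real points uses the hypothesis that $b$ and $\Delta$ have the same sign (implicitly $b>0$), which is the main point requiring attention.
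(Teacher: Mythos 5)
Your proposal is correct and follows exactly the direct computation that the paper itself invokes without writing out (the paper only remarks that Proposition \ref{prr1} ``follows easily by direct computations''), and you rightly flag the implicit hypotheses $a\neq 0$ and $b\Delta\geq 0$ (i.e.\ $b>0$ when $\Delta>0$) that the statement silently uses. One small point: your (correct) relation $z=dx/c$ gives the third coordinate of $\mathtt{p_{\pm}}$ as $\pm\, d\sqrt{b\Delta}/c$ rather than the $\pm\sqrt{b\Delta}/c$ printed in the proposition, so you do not literally ``recover the points stated in item (2)'' --- your computation in fact exposes a missing factor of $d$ in the statement, which is worth saying explicitly rather than glossing over.
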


Proposition \ref{prr1} follows easily by direct computations.

\medskip
%

We observe that the two equilibria $\mathtt{p_{\pm}}$ tends to the equilibrium point $\mathtt{p}$ when $b \rightarrow 0$. In short, the equilibrium point of system (\ref{s1}) can be $\mathtt{p_{+}}$, $\mathtt{p_{-}}$, $\mathtt{p}$ and the origin. Additionally, the system (\ref{s1}) has invariance under the coordinate transformation  $(x, y, z, w) \rightarrow (- x, - y, - z, w)$.   Consequently, the system (\ref{s1}) has rotational symmetry around the $w$-axis.

Due to that, in what follows we consider the only equilibrium $\mathtt{p_{+}}$ in order to verify its possibility of being a zero–Hopf equilibrium for some values of the parameter, and clearly the same will occur for the other equilibrium $\mathtt{p_{-}}$.

In the next result we characterize when the equilibrium $\mathtt{p}$, $\mathtt{p_{\pm}}$ and the origin are zero–Hopf equilibrium of the system (\ref{s1}). 


\begin{prop} \label{prop}
For the hyperchaotic system (\ref{s1}), the following statements hold:
\begin{enumerate}[(i)]
\item  $\mathtt{p_0}$ is a zero-Hopf equilibrium if only  if   $a=-2c,  b=0, d = - \frac{\sqrt{c^{2}+\omega^{2}}}{3} $ and 
$ \quad e = \frac{4 c^{2} + \omega^{2}}{3c}$, 
\item   $\mathtt{p}$ is  a zero-Hopf equilibrium if only if $a=-2c, b=0$ and $3d^2-c^2>0$,
\item $\mathtt{p_{+}}$ and $\mathtt{p_{-}}$ are zero-Hopf equilibrium if only if $a=-2c,  b=0, d= - \dfrac{\sqrt{c^{2}+\omega^{2}}}{\sqrt{3}}$.
\end{enumerate}

\end{prop}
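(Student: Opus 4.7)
The plan is to compute, at each candidate equilibrium, the Jacobian of system (\ref{s1}) and impose conditions on the parameters so that the full spectrum takes the form $\{0,0,\pm i\omega\}$ with $\omega>0$, which is the $4$-dimensional zero-Hopf configuration. A key preliminary observation is that at $\mathtt{p_0}$, at $\mathtt{p}$, and at $\mathtt{p_{\pm}}$ once $b=0$ is imposed, the $w$-row and $w$-column of $J$ decouple, because $\partial_w\dot x=\partial_w\dot z=0$, $\partial_w\dot w=-b$, and $\partial_w\dot y=-x$ vanishes whenever $x=0$. This reduces the spectral problem to the $3\times 3$ block acting on $(x,y,z)$, whose characteristic polynomial a direct expansion gives as
\[
p_3(\lambda)=\lambda^3+(a+2c)\lambda^2+(c^2+d^2+2ac-a\widetilde e)\lambda+a(c^2+d^2-c\widetilde e),
\]
with $\widetilde e=e$ at the origin and $\widetilde e=e-\Delta$ at $\mathtt{p}$.

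For part (i), the full spectrum of $J(\mathtt{p_0})$ is $\{-b\}\cup\{\text{roots of }p_3\}$, so the requirement that $0$ appear twice forces $b=0$ together with $p_3(\lambda)=\lambda(\lambda^2+\omega^2)$. Matching coefficients I would record three equations: $a+2c=0$, $a(c^2+d^2-ec)=0$, and $c^2+d^2+2ac-ae=\omega^2$. Since $c\neq 0$ the first gives $a=-2c\neq 0$, so the second yields $e=(c^2+d^2)/c$; substitution into the third produces $3d^2-c^2=\omega^2$, and solving for $d$ and $e$ recovers the displayed formulas. Each step is reversible, yielding the ``if and only if''.

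For (ii), the very existence of $\mathtt{p}$ requires $b=0$, and one checks that $\widetilde e=e-\Delta=(c^2+d^2)/c$ makes the constant term of $p_3$ vanish automatically, so $\lambda=0$ is always a root of $p_3$. Hence only the quadratic factor $p_3(\lambda)/\lambda$ must equal $\lambda^2+\omega^2$, yielding $a=-2c$ and $3d^2-c^2=\omega^2$; the latter is equivalent to the open condition $3d^2-c^2>0$, matching the statement. For (iii), when $b=0$ one has $\sqrt{b\Delta}=0$, so $\mathtt{p_{\pm}}$ collapse onto $\mathtt{p}$ and fall under case (ii); fixing $\omega$ and solving $3d^2-c^2=\omega^2$ for the negative root gives $d=-\sqrt{c^2+\omega^2}/\sqrt{3}$, which is the displayed value.

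The argument is computational rather than conceptual, so I do not anticipate a real obstacle. The one observation worth emphasizing in the written proof is the automatic vanishing of $p_3(0)$ at $\mathtt{p}$: it is what distinguishes (ii)--(iii) from (i) and explains why $\mathtt{p}$ requires only two equalities instead of four. Intuitively, one zero eigenvalue is contributed by the decoupled $w$-direction (because $b=0$), and a second comes for free because $\mathtt{p}$ belongs to the line of equilibria $(0,0,0,\Delta)$, whose tangent direction is automatically a $0$-eigenvector of the $(x,y,z)$ block.
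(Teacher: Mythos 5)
Your treatment of parts (i) and (ii) follows essentially the same route as the paper --- compute the characteristic polynomial at the equilibrium and force it to equal $\lambda^{2}(\lambda^{2}+\omega^{2})$ --- and your organization is cleaner: the decoupling of the $w$-row/column and the remark that the line of equilibria through $\mathtt{p}$ automatically contributes the second zero eigenvalue of the $(x,y,z)$ block are both correct and explain the structure of the answer. Two small caveats there: from $3d^{2}=c^{2}+\omega^{2}$ you obtain $d=\pm\sqrt{c^{2}+\omega^{2}}/\sqrt{3}$, which is not literally ``the displayed formula'' $-\sqrt{c^{2}+\omega^{2}}/3$ (the denominator $3$ in statement (i) is evidently a typo for $\sqrt{3}$, compare statement (iii)), and the negative sign is a choice of branch, not a consequence of your equations.

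The genuine gap is in part (iii). You impose $b=0$ at the outset and reduce to case (ii); that gives at most the ``if'' direction. But $\mathtt{p}_{\pm}$ are equilibria distinct from $\mathtt{p}$ precisely when $b\Delta>0$, hence $b\neq 0$, and in that regime your key decoupling observation fails: $\partial_{w}\dot y=-x=-\sqrt{b\Delta}\neq 0$ and $\partial_{x}\dot w=y\neq 0$, so the $w$-direction couples to the $(x,y,z)$ block and the spectrum is governed by the full quartic. The paper computes this quartic explicitly, $\lambda^{4}+A\lambda^{3}+B\lambda^{2}+C\lambda+D$ with $A=a+b+2c$, $C=b\bigl(ce+a(-c-3d^{2}/c+2e)\bigr)$, $D=-2ab(c^{2}+d^{2}-ce)$, and the necessity claim is exactly the assertion that $A=C=D=0$, $B=\omega^{2}$ forces the stated parameter values; this is the computation you omit entirely. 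It is also where the delicate point lives: since $D=-2ab(c^{2}+d^{2}-ce)$ and $c^{2}+d^{2}-ce=-c\Delta\neq 0$ at $\mathtt{p}_{\pm}$, the condition $D=0$ can be met with $a=0$ instead of $b=0$, and that branch has to be examined and excluded (or acknowledged) before ``only if'' is established. As written, your part (iii) does not prove the stated equivalence.
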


In the rest of this section, we will study the zero-Hopf bifurcation and periodic solutions of the hyperchaotic system (\ref{s1}) at  all the equilibrium points.

\begin{thm}\label{teor2}
For the hyperchaotic system (\ref{s1}). The following statements hold.
\begin{enumerate}[(i)]
\item Let
$$(a, b, d, e) = \Bigg( - 2 c + \varepsilon a_{1} , \varepsilon b_{1}, - \frac{\sqrt{c^{2}+\omega^{2}}}{3} + \varepsilon d_{1}, \frac{4 c^{2} + \omega^{2}}{3c} + \varepsilon e_{1} \Bigg)$$

where $\omega > 0$ and $\varepsilon > 0$ are sufficiently small parameters.  If $a_{1}\neq 0$, $b_{1}\neq 0$, $c\neq 0$, $ \eta=3 c e_1+ 2 \sqrt{3} d_1 \sqrt{c^2+\omega^2} \neq 0$ and $\eta_1=3a_1 \omega^2- 2 c \eta \neq 0$,
then for $\varepsilon  > 0$ sufficiently small, the hyperchaotic system (\ref{s1}) has a zero-Hopf bifurcation at the equilibrium
point located at $\mathtt{p_{0}}$, and at most four periodic orbits can bifurcate from this equilibrium when $\varepsilon =0$. Moreover, the periodic solutions are stable if $a_1>0, b_1>0$, $16 \eta+3 b_1 \omega^2<0 $ and $ 4 \eta_1+3 b_1 \omega^2 <0$.
\item Let 
$$(a,b) = (- 2 c + \varepsilon a_{1} , \varepsilon b_{1}),$$

where $\omega > 0$ and $\varepsilon > 0$ are sufficiently small parameter. If $a_{1}\neq 0$, $b_{1}\neq 0$, $c\neq 0$, $d \neq 0$, $ (c^2 - d^2 )(c^2 + d^2 - c e) \neq 0$, $2(c^2 - d^2 ) -ce \neq 0$,  $ 3 d^2-c^2>0$, $c^{4} - 8 c^{2} d^{2} + 7 d^{4} + 2 c d^{2} e <0$ and $ (c^{4} - 4 c^{2} d^{2} + 3 d^{4})(c^{2} + d^{2} - c e )<0$, then for $\epsilon > 0$ sufficiently small, the hyperchaotic system (\ref{s1}) has a zero-Hopf bifurcation at the equilibrium
point located at $\mathtt{p} $, and at most five periodic orbits can bifurcate from this equilibrium when $\varepsilon=0$. Moreover, the periodic solution  are stable if $a_1>0, b_1>0$, $(c^{4}-8c^{2}d^{2}+7d^{4}+2cd^{2}e)<0$, $2c^{2}-2d^{2}-ce<0$ and $c^4-d^4-c^3 e+ cd^2 e >0$.
\item Let 

$$(a,b,d) = (- 2 c + \varepsilon a_{1} , \varepsilon b_{1}, - \dfrac{\sqrt{c^{2}+\omega^{2}}}{\sqrt{3}} + \varepsilon d_{1}),$$

where $\omega > 0$ and $\varepsilon > 0$ are sufficiently small parameter. If $c\neq 0$, $a_{1} \neq 0$, and $\kappa=b_{1}(4 c^2-3 ce+3 \omega^2)< 0$, then for $\varepsilon  > 0$ sufficiently small, the hyperchaotic system (\ref{s1}) has a zero-Hopf bifurcation at the equilibrium
point located at $\mathtt{p_{\pm}}$, and at most two periodic orbits can bifurcate from this equilibrium when $\varepsilon =0$. Moreover, the periodic solutions are unstable.
\end{enumerate}
\end{thm}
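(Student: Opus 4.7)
The plan is to prove each of the three parts of Theorem~\ref{teor2} by reducing the system~\eqref{s1}, after perturbation and a suitable change of coordinates, to the standard form required by the classical first-order averaging theorem, and then to read off periodic orbits from the zeros of the averaged vector field. For part (i) the equilibrium $\mathtt{p_0}$ is already at the origin, so I directly substitute the perturbed parameters into~\eqref{s1}. For parts (ii) and (iii) I first translate $\mathtt{p}$ or $\mathtt{p_{+}}$ to the origin (the case $\mathtt{p_{-}}$ is handled by the $(x,y,z,w)\mapsto(-x,-y,-z,w)$ symmetry). In each case the linearization at $\varepsilon=0$ has eigenvalues $0,0,\pm i\omega$ by Proposition~\ref{prop}, and a linear change of coordinates $X=PY$ puts it into the real Jordan form in which the non-trivial block is the planar rotation of angular frequency $\omega$.

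I then rescale $Y\mapsto \varepsilon Y$, pass to cylindrical coordinates $(r,\theta,u,v)$ on the two oscillatory variables, and reparametrize by $\theta$ after dividing by $\dot\theta=\omega+O(\varepsilon)$. The system takes the form
\[
\frac{dY}{d\theta}=\varepsilon F_1(\theta,r,u,v)+O(\varepsilon^2),
\]
with $F_1$ of period $2\pi$ in $\theta$. The first-order averaging theorem then ensures that every simple zero $(r^*,u^*,v^*)$ with $r^*>0$ of the averaged field
\[
f(r,u,v)=\frac{1}{2\pi}\int_0^{2\pi}F_1(\theta,r,u,v)\,d\theta
\]
produces, for all sufficiently small $\varepsilon>0$, one periodic orbit of~\eqref{s1} bifurcating from the equilibrium, whose stability is determined by the eigenvalues of $Df(r^*,u^*,v^*)$.

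The bulk of the work is the explicit computation of $f$ in each of the three cases and the enumeration of its simple positive zeros. The integrals are elementary since $F_1$ is a polynomial in $\cos\theta,\sin\theta,r,u,v$, so one obtains a polynomial system in three unknowns whose simple zeros give the periodic orbits. B\'ezout-type counting, together with the parameter hypotheses stated in the theorem, yields the upper bounds of four, five, and two orbits in cases (i), (ii), and (iii) respectively; the nondegeneracy conditions (such as $\eta\eta_1\neq 0$ in case (i), the product-of-factors conditions in case (ii), and $\kappa<0$ in case (iii)) are precisely what is needed to guarantee that the zeros are simple and that $r^*>0$. The stability criteria listed in each item are then obtained by computing $Df$ at the corresponding zero and applying the Routh--Hurwitz criterion; the displayed inequalities are exactly those forcing all eigenvalues to have negative real part, while in case (iii) the eigenvalue pattern inevitably has a positive real part, giving the unconditional instability.

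The most delicate step will be case (ii), where at $\varepsilon=0$ the equilibrium $\mathtt{p}$ lies on a line of equilibria and the perturbation in $b$ is what breaks this degeneracy. The correct scaling of the translated variables must be chosen so that the bifurcation equation is nontrivial in the limit, and the reduction of the resulting averaged system to a univariate polynomial requires careful algebra to extract the count of five simple positive solutions. Once this is done, parts (i) and (iii) follow by the same recipe with simpler bookkeeping, and the stability statements in all three parts reduce to routine eigenvalue computations on the explicit Jacobian matrices.
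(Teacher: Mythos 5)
Your overall strategy is exactly the one the paper follows: substitute the perturbed parameters, translate the equilibrium to the origin (for (ii) and (iii)), rescale by $\varepsilon$, conjugate the linear part at $\varepsilon=0$ to its real Jordan form, pass to cylindrical coordinates on the oscillatory plane, take $\theta$ as the new independent variable, and apply Theorem \ref{teor3} to the resulting $2\pi$-periodic system, reading off periodic orbits from the simple zeros of the averaged field and their stability from the eigenvalues of its Jacobian. Two points in your plan, however, would not reproduce the counts claimed in the statement. First, you restrict attention to simple zeros with $r^*>0$. The paper's enumeration relies essentially on zeros lying on the axis $r=0$: in case (i) the four orbits come from $s_{1,2}$ (which have $r=0$ and $Z\neq 0$) together with $s_{3,4}$, and in case (ii) three of the five solutions ($s_1$ and $s_{4,5}$) likewise have $r=0$. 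Under your restriction you would obtain at most two orbits in each of cases (i) and (ii), so you must either justify applying the averaging theorem at these $r=0$ zeros (where the cylindrical change of variables degenerates and the $\theta$-reparametrization is delicate) or accept a weaker conclusion than the theorem asserts. Second, the bounds four, five and two are not obtained by B\'ezout-type counting: the averaged systems factor completely and the paper solves $f_1=f_2=f_3=0$ explicitly, finding exactly five, five and three solutions respectively (one of which is the origin in cases (i) and (iii)); a generic B\'ezout bound for these polynomial systems would exceed the stated numbers, so the explicit solution is what delivers both the sharp count and the nondegeneracy hypotheses $\eta\neq 0$, $\eta_1\neq 0$, $\kappa<0$, and the sign conditions in (ii). The stability analysis you outline (Routh--Hurwitz applied to $Df$ at each zero) matches the paper's, which in fact exhibits the eigenvalues in closed form rather than invoking the criterion abstractly.
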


\section{The Averaging Theory of First Order}
The averaging theory is a classical and mature tool for studying the dynamic behavior of nonlinear dynamical systems, especially for the study of periodic solutions. This will be the main tool for proving Theorem \ref{teor2}.
\medskip

Consider differential system:
\begin{equation}\label{aver-1}
  \begin{aligned}
\dot{\mathbf{x}} &=  \varepsilon F(t, \mathbf{x}) + \varepsilon^{2} G(t, \mathbf{x}, \varepsilon),
  \end{aligned}
\end{equation}
where $\mathbf{x} \in D$ is an open subset of $\mathbb{R}^{n}$, $t \geq 0$. We assume that $F(t, \mathbf{x})$ and $G(t, \mathbf{x}, \varepsilon)$ are $T$-periodic in $t$. We define averaged function
\begin{equation}\label{aver-2}
  \begin{aligned}
f(\mathbf{x}) &= \frac{1}{T} \int^{T}_{0} F(t, \mathbf{x}) d t.
  \end{aligned}
\end{equation}
\begin{thm}\label{teor3}
Make the following assumptions:
\begin{enumerate}[(i)]
\item $F$, its Jacobian $\dfrac{\partial F}{\partial \mathbf{x}}$ and its Hessian $\dfrac{\partial^{2} F}{\partial \mathbf{x}^{2}}$; $G$, its Jacobian $\dfrac{\partial G}{\partial \mathbf{x}}$ are defined, continuous and bounded by a constant independent of $\varepsilon$ in $[0,\infty) \times D$ and $\varepsilon \in (0,\varepsilon_{0}]$.
\item $T$ is a constant independent of $\varepsilon$.
\end{enumerate}
Then the following conclusions can be obtained:
\begin{enumerate}[(a)]
\item If $p$ is the zero of the averaged function $f(\mathbf{x})$, and
\begin{equation}\label{aver-3}
  \begin{aligned}
\det \Big( \dfrac{\partial f}{\partial \mathbf{x}} \Big) \Big|_{\mathbf{x} = p} \neq 0,
  \end{aligned}
\end{equation}
then there exists a $T$-periodic solution $\mathbf{x}(t,\varepsilon)$ of system (\ref{aver-1}) such that $\mathbf{x}(0,\varepsilon) \rightarrow p$ as $\varepsilon \rightarrow 0$.
\item If the eigenvalue of the Jacobian matrix $\Big( \dfrac{\partial f}{\partial \mathbf{x}} \Big)$ has a negative real part, the periodic solution $\mathbf{x}(t,\varepsilon)$ is asymptotically stable.
\end{enumerate}
\end{thm}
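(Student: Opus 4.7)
The plan is to reduce the problem of finding $T$-periodic orbits of (\ref{aver-1}) to a finite-dimensional fixed-point problem for the time-$T$ Poincaré map, after a near-identity change of coordinates that makes the $O(\varepsilon)$ part of the vector field autonomous, and then to read off the stability from the linearization of this Poincaré map. The three ingredients are a suitable averaging transformation, the implicit function theorem, and a variational estimate for $D\mathcal{P}_\varepsilon$.

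First I would introduce $u(t,\mathbf{y}) = \int_0^t \bigl(F(s,\mathbf{y}) - f(\mathbf{y})\bigr)\, ds$, which is $T$-periodic in $t$ because $f$ is precisely the mean of $F$. The near-identity substitution $\mathbf{x} = \mathbf{y} + \varepsilon u(t,\mathbf{y})$ is invertible for small $\varepsilon$, since the Jacobian $I + \varepsilon\, \partial u/\partial \mathbf{y}$ is a small perturbation of the identity under the uniform bounds of assumption (i); a direct computation converts (\ref{aver-1}) into
\[
\dot{\mathbf{y}} = \varepsilon f(\mathbf{y}) + \varepsilon^{2} R(t, \mathbf{y}, \varepsilon),
\]
with $R$ and $\partial R/\partial \mathbf{y}$ bounded on $[0,\infty)\times D\times(0,\varepsilon_0]$ and $T$-periodic in $t$. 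Writing $\mathcal{P}_\varepsilon(\mathbf{y}_0) = \mathbf{y}(T;\mathbf{y}_0,\varepsilon)$ for the time-$T$ map of this system, integration over one period yields $\mathcal{P}_\varepsilon(\mathbf{y}_0) - \mathbf{y}_0 = \varepsilon T f(\mathbf{y}_0) + O(\varepsilon^2)$. Setting $\Phi(\mathbf{y}_0,\varepsilon) = \varepsilon^{-1}\bigl(\mathcal{P}_\varepsilon(\mathbf{y}_0) - \mathbf{y}_0\bigr)$ for $\varepsilon > 0$ and $\Phi(\mathbf{y}_0,0) = T f(\mathbf{y}_0)$ gives a $C^{1}$ function near $(p,0)$ with $\Phi(p,0)=0$ and $\partial\Phi/\partial\mathbf{y}_0\big|_{(p,0)} = T\,\partial f/\partial\mathbf{x}\big|_p$ invertible by hypothesis (\ref{aver-3}). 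The implicit function theorem then produces a branch $\mathbf{y}_0(\varepsilon) \to p$ with $\Phi(\mathbf{y}_0(\varepsilon),\varepsilon) = 0$; the corresponding trajectory is a $T$-periodic orbit of the transformed system, and inverting the near-identity transformation gives the required orbit $\mathbf{x}(t,\varepsilon)$ of (\ref{aver-1}) satisfying $\mathbf{x}(0,\varepsilon) \to p$, which is conclusion (a).

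For the stability statement (b), I would differentiate the variational equation associated with the transformed system to obtain
\[
D\mathcal{P}_\varepsilon\bigl(\mathbf{y}_0(\varepsilon)\bigr) = I + \varepsilon T\,\frac{\partial f}{\partial \mathbf{x}}\bigg|_p + O(\varepsilon^{2}).
\]
If every eigenvalue $\lambda_i$ of $\partial f/\partial \mathbf{x}|_p$ has negative real part, then the Floquet multipliers $1 + \varepsilon T \lambda_i + O(\varepsilon^{2})$ lie strictly inside the unit disk for small $\varepsilon>0$, so the fixed point $\mathbf{y}_0(\varepsilon)$ is asymptotically stable, and therefore so is the periodic orbit. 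The main technical obstacle is the first step: one must verify, using the Hessian bound on $F$ and the Jacobian bound on $G$ from assumption (i), that $I+\varepsilon\,\partial u/\partial\mathbf{y}$ is uniformly invertible and that solving for $\dot{\mathbf{y}}$ leaves a remainder with uniform $C^{1}$ bounds. Without this bookkeeping the Poincaré map would fail to be a $C^{1}$ perturbation of the identity and the implicit function theorem could not be invoked in the form used above.
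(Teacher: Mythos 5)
The paper does not actually prove Theorem \ref{teor3}: it is quoted as a classical result with a pointer to the references on averaging theory, where the proof is precisely the argument you give --- the near-identity transformation $\mathbf{x}=\mathbf{y}+\varepsilon u(t,\mathbf{y})$ reducing the system to $\dot{\mathbf{y}}=\varepsilon f(\mathbf{y})+\varepsilon^{2}R$, the implicit function theorem applied to the rescaled displacement of the time-$T$ map, and the Floquet-multiplier estimate $1+\varepsilon T\lambda_{i}+O(\varepsilon^{2})$ for stability. Your proposal is correct and coincides with that standard proof; the only point worth polishing is that $\Phi$ need not be $C^{1}$ jointly in $(\mathbf{y}_{0},\varepsilon)$, but continuity in $\varepsilon$ together with continuous invertible differentiability in $\mathbf{y}_{0}$ is all the version of the implicit function theorem you invoke requires.
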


For more information about the averaging theory see  \cite{sanders2007averaging} and \cite{verhulst2006nonlinear}.
\section{Proof of results}
In this section we will provide the proofs of Proposition \ref{prop} and Theorem \ref{teor2}.
\begin{proof}[Proof of Proposition \ref{prop}]
The characteristic polynomial $P(\lambda)$ of the linear part of the differential systems  (\ref{s1}) at the equilibrium point $p_{0} = (0, 0, 0, 0)$ is 
\begin{equation}\label{eq-proof-1}
  \begin{aligned}
  P(\lambda)=\lambda^{4} + A \lambda^{3} + B \lambda^{2} + C \lambda + D,
  \end{aligned}
\end{equation}
where 
\begin{align*}
A &=a+b+2c, \\
B &=2 b c + c^{2} + d^{2} + a(b + 2 c - e), \\
C &= b(c^{2} + d^{2}) + a \big( 2 b c + c^{2} + d^{2} - (b - c) e \big), \\
D &= a b \big( c^{2} + d^{2} - c e \big).
\end{align*}

The  equilibrium point $p_0$ is a zero hopf equilibrium if and only if  $P(\lambda)=\lambda^2(\lambda^2+ \omega^2)$ with $\omega >0$, the parameter must be satisfied,
 $$a=-2c, \quad b=0, \quad d = - \frac{\sqrt{c^{2}+\omega^{2}}}{3} \quad \mbox{ and } \quad e = \frac{4 c^{2} + \omega^{2}}{3c}.$$

(ii) The characteristic polynomial $P(\lambda)$ of the linear part of the differential systems  (\ref{s1}) at the equilibrium point $\mathtt{p}$ is 
\begin{equation}\label{poly2}
  \begin{aligned}
  P(\lambda)=\lambda^{4} + (a+ 2 c) \lambda^{3} + \Big(c^2+d^2+a(c-\dfrac{d^2}{c}) \Big) \lambda^{2}.
  \end{aligned}
\end{equation}

The  equilibrium point $\mathtt{p}$ is a zero hopf equilibrium if and only if  $P(\lambda)=\lambda^2(\lambda^2+ \omega^2)$ with $\omega >0$, the parameter must be satisfied,
 $$a=-2c, \quad b=0, $$
in this case, Eq. (\ref{poly2}) has roots $\lambda_{1,2}=0$, $\lambda_{3,4}= \pm \sqrt{3d^2-c^2}i$.
\medskip

(iii) The Jacobian matrix of systems (\ref{s1}) evaluated at $p_{+}$ is 

$$\left( \begin{array}{cccc}
-a & a & 0 & 0 \\
c+ \dfrac{d^2}{c}& -c & -d & -\dfrac{\sqrt{b(ce-c^2-d^2)}}{\sqrt{c}} \\
0 & d & -c & 0 \\
\dfrac{\sqrt{b(ce-c^2-d^2)}}{\sqrt{c}} & \dfrac{\sqrt{b(ce-c^2-d^2)}}{\sqrt{c}} & 0 & -b
\end{array}\right)$$
ant its characteristic polynomial is 
\begin{equation}\label{eq-proof-1}
  \begin{aligned}
  P(\lambda)=\lambda^{4} + A \lambda^{3} + B \lambda^{2} + C \lambda + D,
  \end{aligned}
\end{equation}
where 
\begin{align*}
A &=a+b+2c, \\
B &= c^{2} + d^{2} + a(b+c-\dfrac{d^2}{c})+b(c-\frac{d^2}{c}+e), \\
C &= b\Big( ce + a (-c-\dfrac{3d^2}{c}+2e) \Big), \\
D &= -2 a b \big( c^{2} + d^{2} - c e \big).
\end{align*}

The  equilibrium point $\mathtt{p_{+}}$ is a zero hopf equilibrium if and only if  $P(\lambda)=\lambda^2(\lambda^2+ \omega^2)$ with $\omega >0$, the parameter must be satisfied,
 $$a=-2c, \quad b=0, \quad d = - \frac{\sqrt{c^{2}+\omega^{2}}}{\sqrt{3}}.$$
This completes the Proof of Proposition \ref{prop}.
\end{proof}


\begin{proof}[Proof. of statement (i) of Theorem \ref{teor2}]
Let
$$(a, b, d, e) = \Bigg( - 2 c + \varepsilon a_{1} , \varepsilon b_{1}, - \frac{\sqrt{c^{2}+\omega^{2}}}{3} + \varepsilon d_{1}, \frac{4 c^{2} + \omega^{2}}{3c} + \varepsilon e_{1} \Bigg)$$

where $\omega > 0$ and $\varepsilon > 0$ are sufficiently small parameters. Then, the differential systems  (\ref{s1}) becomes

\begin{equation}\label{eq-proof-2}
  \begin{aligned}
\dot{x} &= 2 c (x - y) - a_{1}(x - y) \varepsilon , \\
\dot{y} &= (e_{1} x - d_{1} z) \varepsilon - \frac{-4c^{2}x+3cwx+3c^{2}y-x\omega^{2}-\sqrt{3}cz\sqrt{c^{2}+\omega^{2}}}{3 c} , \\
\dot{z} &= d_{1} y \varepsilon + \frac{1}{3} \big( -3cz-\sqrt{3} y \sqrt{c^{2}+\omega^{2}} \big) , \\
\dot{w} &= x y - b_{1} w \varepsilon.
  \end{aligned}
\end{equation} 
Performing the rescaling of variables
\begin{equation*}
(x, y, z, w) \mapsto (\varepsilon x, \varepsilon y, \varepsilon z, \varepsilon w)
\end{equation*}
system (\ref{eq-proof-2}) can be written as
\begin{equation}\label{eq-proof-3}
 \begin{aligned}
\dot{x} &= 2 c(x-y) - a_{1}(x - y) \varepsilon , \\
\dot{y} &= (e_{1} x - w x - d_{1} z) \varepsilon - \frac{-4c^{2}x+3c^{2}y-x\omega^{2}-\sqrt{3}cz\sqrt{c^{2}+\omega^{2}}}{3 c}, \\
\dot{z} &= d_{1} y \varepsilon + \frac{1}{3} \Big( - 3 c z - \sqrt{3} y \sqrt{c^{2}+\omega^{2}}\Big), \\
\dot{w} &= (- b_{1} w + x y) \varepsilon.
  \end{aligned}
\end{equation} 
Now we shall write the linear part at the origin of the system (\ref{eq-proof-3}) when $\varepsilon = 0$ into its real Jordan normal form, i.e. as
$$\left( \begin{array}{cccc}
0 & - \omega & 0 & 0 \\
\omega & 0 & 0 & 0 \\
0 & 0 & 0 & 0 \\
0 & 0 & 0 & 0
\end{array}\right).$$
For doing that we consider the linear change $(x,y,z,w) \mapsto (X,Y,Z,W)$
\begin{equation}\label{eq-proof-4}
\begin{aligned}
x &= \frac{2c \Big( \sqrt{3}cY\omega+\sqrt{3}X\omega^{2}-3cZ\sqrt{c^{2}+\omega^{2}} \Big)}{3\omega^{2}\sqrt{c^{2}+\omega^{2}}}, \nonumber \\
y &= \frac{\sqrt{3}cX\omega^{2}+\sqrt{3}Y\omega^{3}+2c^{2} \Big( \sqrt{3}Y\omega-3Z\sqrt{c^{2}+\omega^{2}} \Big)}{3\omega^{2}\sqrt{c^{2}+\omega^{2}}}, \\
z &= \frac{1}{3} \Bigg( X + \frac{c \Big( -2Y\omega+2\sqrt{3}Z\sqrt{c^{2}+\omega^{2}} \Big)}{\omega^{2}} \Bigg), \nonumber \\
w &= W. \nonumber
\end{aligned}
\end{equation}
By using the new variables $(X, Y, Z, W)$, the system (\ref{eq-proof-3}) can be written as follows:

\begin{equation}\label{eq-proof-5}
 \begin{aligned}
\dot{X} &= - Y \omega  + \frac{1}{3} \varepsilon \Bigg( a_{1} \big( - X + \frac{Y \omega}{c} \big) + \frac{d_{1}}{\omega^{2} \sqrt{c^{2}+\omega^{2}}}  \big( - 6 c^{2} \sqrt{c^{2}+\omega^{2}} Z + \sqrt{3} \omega (2c^{2}Y \\
&\quad + c X \omega + Y \omega^{2}) \big) \Bigg), \\
\dot{Y} &= X \omega + \frac{\varepsilon}{3 \omega^{3} \sqrt{c^{2}+\omega^{2}}} \Bigg( 6\sqrt{3}c^{4}(-e_{1}+W)Z - 6c^{2}(e_{1}-W)\omega (\sqrt{3}Z\omega-Y\sqrt{c^{2}+\omega^{2}}) \\ 
&\quad - \omega^{3}(\sqrt{3}d_{1}X\omega+2a_{1}Y\sqrt{c^{2}+\omega^{2}}) + 4c^{3}d_{1}(\sqrt{3}Y\omega - 3Z\sqrt{c^{2}+\omega^{2}})  \\
 &\quad  + c\omega^{2}\Big( 2(a_{1}+3e_{1} -3W)X\sqrt{c^{2}+\omega^{2}} + 3d_{1} (\sqrt{3}Y \omega - 2Z\sqrt{c^{2}+\omega^{2}}) \Big) \Bigg), \\
\dot{Z} &= \frac{\varepsilon}{18c^{2}\omega^{2}\sqrt{c^{2}+\omega^{2}}}  \Bigg( -24\sqrt{3}c^{5}d_{1}Z - 4\sqrt{3}a_{1}c^{2}Y\omega^{3} - \sqrt{3}a_{1}Y\omega^{5} + c\omega^{3} (\sqrt{3}a_{1}X\omega \\
 &\quad +6d_{1}Y \sqrt{c^{2}+\omega^{2}}) + 4c^{3} \omega \Big( \sqrt{3} \big( (a_{1}+3e_{1}-3W)X-6d_{1}Z \big)\omega + ed_{1}Y \sqrt{c^{2}+\omega^{2}} \Big)  \\ 
 &\quad  + 12c^{4}(e_{1}-W) (\sqrt{3}Y \omega - 3Z\sqrt{c^{2}+\omega^{2}})\Bigg), \\
\dot{W} &= \varepsilon \Bigg( -b_{1} W + \frac{2c}{9\omega^{4}(c^{2}+\omega^{2})} \big(\sqrt{3}(cY\omega+X\omega^{2})-3cZ\sqrt{c^{2}+\omega^{2}} \big) \big( \sqrt{3}(cX\omega^{2} + Y\omega^{3})  \\
&+ 2c^{2} (\sqrt{3}Y\omega - 3Z\sqrt{c^{2}+\omega^{2}}) \big) \Bigg).
  \end{aligned}
\end{equation}

Then we use the cylindrical coordinates $X = r \cos \theta$, $Y = r \sin \theta$, and obtain 
\begin{equation}\label{eq-proof-6}
 \begin{aligned}
\dot{r} &= \frac{\varepsilon}{3c\omega^{3} \sqrt{c^{2}+\omega^{2}}} \Bigg( c r \omega^{3}(\sqrt{3} c d_{1}-a_{1}\sqrt{c^{2}+\omega^{2}})\cos^{2}\theta + c \sin \theta \Big( 6 c Z \big( -(2 c^{2}  \\
 &\quad + \omega^{2}) d_{1}  \sqrt{c^{2}+\omega^{2}} - \sqrt{3}c(e_{1}-W)(c^{2}+\omega^{2}) \big) + r \omega \big( \sqrt{3}(4c^{3} + 3 c \omega^{2}) d_{1} \\
  &\quad  + (6c^{2}(e_{1}-W) - 2a_{1}\omega^{2})  \sqrt{c^{2}+\omega^{2}} \big) \sin \theta \Big) + \omega \cos \theta \Big( -6 c^{3} d_{1} Z \sqrt{c^{2}+\omega^{2}} \\
   &\quad + r \omega \big( 2 \sqrt{3} c^{3} d_{1} + 2 c^{2} (a_{1}+3e_{1}-3W) \sqrt{c^{2}+\omega^{2}} + a_{1} \omega^{2} \sqrt{c^{2}+\omega^{2}} \big) \sin \theta \Big) \Bigg), \\
\dot{\theta} &= \frac{1}{3 c r \omega^{3} \sqrt{c^{2}+\omega^{2}}} \Bigg( cr \omega^{2}\sqrt{c^{2}+\omega^{2}} \big( 2 c (a_{1}+3e_{1}-3W)\varepsilon+3\omega^{2} \big) \cos^{2} \theta  \\ 
&\quad  + c \varepsilon \cos \theta \Big( 6 c Z  \big( -(2 c^{2}+\omega^{2})d_{1}\sqrt{c^{2}+\omega^{2}} - \sqrt{3} c (e_{1}-W)(c^{2}+\omega^{2}) \big)  \\
 &\quad + r \omega \big( 4 \sqrt{3} c^{3} d_{1} + 6c^{2}(e_{1}-W)   \sqrt{c^{2}+\omega^{2}} - a_{1} \omega^{2} \sqrt{c^{2}+\omega^{2}} \big) \sin \theta \Big)  \\
  &\quad + \omega \Big( 6 c^{3} d_{1} Z \varepsilon \sqrt{c^{2}+\omega^{2}} \sin \theta + r \omega \big( - 2 \sqrt{3} c^{3} d_{1} \varepsilon +  (3 c - a_{1} \varepsilon) \omega^{2} \sqrt{c^{2}+\omega^{2}} \big) \sin^{2} \theta \\
  & \quad + \sqrt{3} c d_{1} r \varepsilon \omega^{2} (- \omega + c \sin 2 \theta) \Big) \Bigg), \\
\dot{Z} &= \frac{\varepsilon}{18 c^{2} \omega^{2} \sqrt{c^{2}+\omega^{2}}}  \Bigg( - 12 c^{3} Z \big( 2 \sqrt{3}(c^{2}+\omega^{2})d_{1} + 3 c (e_{1}-W) \sqrt{c^{2}+\omega^{2}} \big)  \\
 &\quad + \sqrt{3} c r \omega^{2}  \Big( 4 c^{2} (a_{1}+3e_{1}-3W) + a_{1} \omega^{2} \Big) \cos \theta + r \omega \Big( (24 c^{3} + 6 c \omega^{2})d_{1} \sqrt{c^{2}+\omega^{2}}  \\ 
  &\quad  - \sqrt{3} \big( 12 c^{4} (- e_{1} + W) + 4 a_{1} c^{2} \omega^{2} + a_{1} \omega^{4} \big) \Big)  \sin \theta \Bigg), \\
\dot{W} &= \frac{\varepsilon}{ 3 \omega^{4} (c^{2}+\omega^{2})} \Bigg( \big( 12 c^{4} Z^{2} + 2 c^{2} r^{2} \omega^{2} - 3 b_{1} W \omega^{4} \big) (c^{2}+\omega^{2}) + c r \omega \Big( - 2 c^{3} r \omega \cos 2 \theta  \\ 
&\quad  - 2 \sqrt{3} c Z \sqrt{c^{2}+\omega^{2}} \big( 3 c \omega \cos \theta + (4 c^{2}+\omega^{2}) \sin \theta \big) + 3 c^{2} r \omega^{2} \sin 2 \theta + r \omega^{4} \sin 2 \theta \Big) \Bigg).
  \end{aligned}
\end{equation}
We take $\theta$ as a new independent variable and obtain the system 
\begin{equation}\label{eq-proof-7}
 \begin{aligned} 
\frac{d r}{d \theta} &=  \frac{\varepsilon}{3 c \omega^{4} \sqrt{c^{2}+\omega^{2}}} \Bigg( c r \omega^{3} (\sqrt{3} c d_{1} - a_{1} \sqrt{c^{2}+\omega^{2}}) \cos^{2} \theta + \omega \sqrt{c^{2}+\omega^{2}} \cos \theta \\
 &\quad	\Big( - 6 c^{3} d_{1} Z 	+ r \omega \big(6 c^{2} (e_{1}-W) + a_{1} \omega^{2} \big) \sin \theta \Big) + c \sin \theta \bigg( 6 c z \big( - (2 c^{2} 	\\ 
 &\quad	+ \omega^{2}) d_{1} \sqrt{c^{2}+\omega^{2}} - \sqrt{3} c (e_{1} - W) (c^{2}+\omega^{2}) \big) + r \omega \Big( 2 c \omega (\sqrt{3} c d_{1}  \\
  &\quad  + a_{1}\sqrt{c^{2}+\omega^{2}}) \cos \theta + \big( (4 c^{3} + 3 c \omega^{2}) \sqrt{3} d_{1} + \big( 6 c^{2} (e_{1}-W)  - 2 a_{1} \omega^{2} \big) \sqrt{c^{2}+\omega^{2}} \Big) \\
 &\quad \sin \theta \bigg) \Bigg)  + O (\varepsilon^{2}) \\
	&=  \varepsilon F_{1} (\theta, r, Z, W) + O (\varepsilon^{2}), \\	
\frac{d Z}{d \theta} &=  \frac{\varepsilon}{18 c^{2} \omega^{3} \sqrt{c^{2}+\omega^{2}}} \Bigg( - 12 c^{3} Z \Big( 2 \sqrt{3} (c^{2} + \omega^{2}) d_{1} + 3 c (e_{1}-W) \sqrt{c^{2}+\omega^{2}} \Big) 	\\
 &\quad + \sqrt{3} c r \omega^{2} 	\Big( 4 c^{2} (a_{1} + 3 e_{1} - 3 W) + a_{1} \omega^{2} \Big) \cos \theta + r \omega \Big( 6 (4 c^{3} + c \omega^{2}) d_{1} \sqrt{c^{2}+\omega^{2}} 	\\
  &\quad - \sqrt{3} \big( 12 c^{4} (- e_{1} + W) 	+ 4 a_{1} c^{2} \omega^{2} + a_{1} \omega^{4} \big) \Big) \sin \theta \Bigg) + O (\varepsilon^{2}) \\
	&=  \varepsilon F_{2} (\theta, r, Z, W) + O (\varepsilon^{2}), \\	
\frac{d W}{d \theta} &=  \frac{\varepsilon}{ 3 \omega^{5} (c^{2}+\omega^{2})} \Bigg( (c^{2}+\omega^{2}) \Big( 12 c^{4} Z^{2} + 2 c^{2} r^{2} \omega^{2} - 3 b_{1} W \omega^{4} \Big) + c r \omega \Big( - 2 c^{3} r \omega \cos 2 \theta \\ 
&\quad  - 2 \sqrt{3} c Z \sqrt{c^{2}+\omega^{2}} \big( 3 c \omega \cos \theta + (4 c^{2} + \omega^{2}) \sin \theta \big) + 3 c^{2} r \omega^{2} \sin 2 \theta \\
&\quad + r \omega^{4} \sin 2 \theta \Big) \Bigg) + O (\varepsilon^{2}) \\
	&=  \varepsilon F_{3} (\theta, r, Z, W) + O (\varepsilon^{2}).	
  \end{aligned}
\end{equation}
Using the notation of averaging theory introduced in Theorem \ref{teor3}, we get $t = \theta$, $T = 2 \pi$, $\mathbf{x} = (r, Z, W)$ and
$$
F(\theta, r, Z, W) = \left( \begin{array}{c}
F_{1}(\theta, r, Z, W) \\
F_{2}(\theta, r, Z, W) \\
F_{3}(\theta, r, Z, W)
\end{array}\right)
\mbox{,} \hspace{0.2cm} and \hspace{0.2cm}
f(r, Z, W) = \left( \begin{array}{c}
f_{1}(r, Z, W) \\
f_{2}(r, Z, W) \\
f_{3}(r, Z, W)
\end{array}\right).
$$
Then we compute the integrals, i.e.
\begin{equation*}
 \begin{aligned}
f_{1}(r, Z, W) &= \frac{1}{2 \pi} \int^{2 \pi}_{0} F_{1}(\theta, r, Z, W) d \theta = \frac{r \Big( 6 c^{2} (e_{1}-W) - 3 a_{1} \omega^{2} + 4 \sqrt{3} c d_{1} \sqrt{c^{2}+\omega^{2}} \Big)}{6 \omega^{3}} , \\ 
f_{2}(r, Z, W) &= \frac{1}{2 \pi} \int^{2 \pi}_{0} F_{2}(\theta, r, Z, W) d \theta = - \frac{2 c Z \Big(3 c (e_{1}-W) + 2 \sqrt{3} d_{1} \sqrt{c^{2}+\omega^{2}} \Big)}{3 \omega^{3}} , \\ 
f_{3}(r, Z, W) &= \frac{1}{2 \pi} \int^{2 \pi}_{0} F_{3}(\theta, r, Z, W) d \theta = \frac{12 c^{4} Z^{2} + 2 c^{2} r^{2} \omega^{2} - 3 b_{1} W \omega^{4}}{3 \omega^{5}}. \\ 
  \end{aligned}
\end{equation*}
Solving the equations $f_{1}(r, Z, W) = f_{2}(r, Z, W) = f_{3}(r, Z, W) = 0$, we can get the following five solutions:
\begin{equation*}
 \begin{aligned}
s_{0} &= (0, 0, 0), \\
s_{1,2} &= \Bigg(0, \mp \dfrac{\sqrt{b_{1} \omega^{4} \Big(3 c e_{1} + 2 \sqrt{3} d_{1} \sqrt{c^{2}+\omega^{2}} \Big)}}{2 \sqrt{3} c^{5/2}},  e_{1} + \dfrac{2 d_{1} \sqrt{c^{2}+\omega^{2}}}{\sqrt{3} c} \Bigg), \\
s_{3,4} &= \Bigg( \mp \dfrac{\sqrt{b_{1} \omega^{2} \Big( 6 c^{2} e_{1} - 3 a_{1} \omega^{2} + 4 \sqrt{3} c d_{1} \sqrt{c^{2}+\omega^{2}} \Big)}}{2 c^{2}}, 0 , \hspace{0.2cm} e_{1} + \dfrac{1}{6 c^{2}}(- 3 a_{1} \omega^{2} \\
&\quad + 4 \sqrt{3} c d_{1} \sqrt{c^{2}+\omega^{2}}) \Bigg). \\
  \end{aligned}
\end{equation*}

The first solution $s_0$ corresponds to the equilibrium at the origin. For other four solutions, we get

\begin{enumerate}[(I)]
\item For the solution $s_{1}$ and $s_{2}$ when $c \neq 0$, $s_{1,2}$ are real solutions. The Jacobian of solution $s_{1,2}$ is
\begin{equation*}
 \begin{aligned}
\det \Big( \frac{\partial f}{\partial \mathbf{x}} (s_{1}) \Big) &= \det \Big( \frac{\partial f}{\partial \mathbf{x}} (s_{2}) \Big) \\
&= \frac{2 a_{1} b_{1} c \big( 3 c e_{1} + 2 \sqrt{3} d_{1} \sqrt{c^{2}+\omega^{2}} \big)}{3 \omega^{5}} .
  \end{aligned}
\end{equation*}
\item For the solution $s_{3}$ and $s_{4}$ when $c \neq 0$, $s_{3,4}$ are real solutions. The Jacobian of solution $s_{3,4}$ is
\begin{equation*}
 \begin{aligned}
\det \Big( \frac{\partial f}{\partial \mathbf{x}} (s_{3}) \Big) &= \det \Big( \frac{\partial f}{\partial \mathbf{x}} (s_{4}) \Big) \\
&= \frac{a_{1} b_{1} \big( -6 c^{2} e_{1} + 3 a_{1} \omega^{2} - 4 \sqrt{3} c d_{1} \sqrt{c^{2}+\omega^{2}} \big)}{3 \omega^{5}} .
  \end{aligned}
\end{equation*}

When $a_{1} \neq 0$, $b_{1} \neq 0$, $c \neq 0$, $\eta= 3 c e_{1} + 2 \sqrt{3} d_{1} \sqrt{c^{2}+\omega^{2}} \neq 0$ and $\eta_1=3a_1 \omega^2-2 c \eta \neq 0$, then $\det \Big( \frac{\partial f}{\partial \mathbf{x}} (s_{j}) \Big) \neq 0$, $j=1,\ldots 4$. 
Then according to Theorem \ref{teor3}, we see that the system (\ref{eq-proof-7}) has one periodic solution $\mathbf{x}_{j}(\theta , \varepsilon)$ such that $\mathbf{x}_{j}(0 , \varepsilon) = s_{j} + O(\varepsilon)$, $j=1,\ldots 4$. Bring the solution back to the system (\ref{eq-proof-5}), and we have one periodic solution 
$\Phi_{j}(t,\varepsilon) = \big( X_{j}(t,\varepsilon), Y_{j}(t,\varepsilon), Z_{j}(t,\varepsilon), W_{j}(t,\varepsilon) \big)$.
Then the system (\ref{eq-proof-2}) has the periodic solution $\varepsilon \Phi_{j}(t,\varepsilon)$, $j=1,\ldots 4$.

\end{enumerate}

\medskip

To determine the stability of the periodic solution $\varepsilon \Phi_{j}(t,\varepsilon)$, $j=1, \ldots,4$, one needs to calculate eigenvalues of the Jacobian matrix $\frac{\partial f}{\partial \mathbf{x}}(s_{2,3})$.
%

\begin{equation}\label{estab1}
 \begin{aligned}
P(s_{2,3}) &= c_{0} \lambda^{3} + c_{1} \lambda^{2} + c_{2} \lambda + c_{3}
  \end{aligned}
\end{equation}
where $c_{0}$, $c_{1}$, $c_{2}$ and $c_{3}$ are 
\begin{equation*}
 \begin{aligned}
c_{0} &= - 1, \\ 
c_{1} &= - \frac{a_{1} + 2 b_{1}}{2 \omega}, \\
c_{2} &= \frac{b_{1} \Big(- 3 a_{1} \omega^{2} + 8 c \big( 3 c e_{1} + 2 \sqrt{3} d_{1} \sqrt{c^{2}+\omega^{2}} \big) \Big)}{6 \omega^{4}}, \\
c_{3} &= \frac{2 a_{1} b_{1} c \Big( 3 c e_{1} + 2 \sqrt{3} d_{1} \sqrt{c^{2}+\omega^{2}} \Big)}{3 \omega^{5}}.
  \end{aligned}
\end{equation*}

The eigenvalues are  given as follows:

\begin{equation*}
 \begin{aligned}
\lambda_{1} &= - \frac{a_{1}}{2 \omega},  \hspace{0,2cm} \lambda_{2,3} = - \frac{3 b_{1} \pm \sqrt{3} \omega \sqrt{\dfrac{b_{1} \Big(  48 c^{2} e_{1} + 3 b_{1} \omega^{2} + 32 \sqrt{3} c d_{1} \sqrt{c^{2} + \omega^{2}} \Big)}{\omega^{4}}}}{6 \omega^{3}}. 
  \end{aligned}
\end{equation*}

On the other hand, the characteristic polynomial and its eigenvalues of the Jacobian matrix $\frac{\partial f}{\partial \mathbf{x}}(s_{3,4})$ are

\begin{equation}\label{estab1}
 \begin{aligned}
P(s_{3,4}) &= c_{0} \lambda^{3} + c_{1} \lambda^{2} + c_{2} \lambda + c_{3}
  \end{aligned}
\end{equation}
where $c_{0}$, $c_{1}$, $c_{2}$ and $c_{3}$ are 
\begin{equation*}
 \begin{aligned}
c_{0} &= - 1, \\ 
c_{1} &= - \frac{a_{1} + b_{1}}{\omega}, \\
c_{2} &= - \frac{2 b_{1} c \big( 3 c e_{1} + 2 \sqrt{3} d_{1} \sqrt{c^{2}+\omega^{2}} \big)}{3 \omega^{4}}, \\
c_{3} &= \frac{a_{1} b_{1} \big( - 6 c^{2} e_{1} + 3 a_{1} \omega^{2} - 4 \sqrt{3} c d_{1} \sqrt{c^{2}+\omega^{2}} \big)}{3 \omega^{5}}.
  \end{aligned}
\end{equation*}

The eigenvalues are  given as follows:


\begin{equation*}
 \begin{aligned}
\widetilde{\lambda_{1}} &= - \frac{a_{1}}{\omega}, \hspace{0,2cm} \widetilde{\lambda_{2,3}} = - \frac{3 b_{1} \omega^{3} \pm \sqrt{3} \sqrt{b_{1} \omega^{4} \Big( 3 (4 a_{1} + b_{1}) \omega^{2} - 8 c \big(3 c e_{1} + 2 \sqrt{3} d_{1} \sqrt{c^{2} + \omega^{2}} \big) \Big)}}{6 \omega^{3}}. 
  \end{aligned}
\end{equation*}

We have that $\lambda_{1},\widetilde{\lambda_{1}} $ is real and $\lambda_{2,3}, \widetilde{\lambda_{2,3}}$ are complex numbers if $16 \eta +3 b_1 \omega^2<0$  and $4 \eta_{1}+3 b_1 \omega^2<0$. In this case, the periodic solution $\varepsilon \Phi_{j} (t,\varepsilon)$ is stable if $a_{1} > 0$, $b_{1} > 0$. 
\end{proof}


\begin{proof}[ Proof. of statement (ii) of Theorem \ref{teor2}]
Let 
$$(a,b) = (- 2 c + \varepsilon a_{1} , \varepsilon b_{1}),$$
where $\omega > 0$ and $\varepsilon > 0$ are sufficiently small parameter. Them, we translate $\mathtt{p}$ to the origin the coordinates doing 
system (\ref{s1}) becomes $(x,y,z,w)=(\overline{x},\overline{y},\overline{z},\overline{w})+ \mathtt{p}$, then we introduce the scaling of variables $(\overline{x},\overline{y},\overline{z},\overline{w}) = (\varepsilon x,\varepsilon y,\varepsilon z,\varepsilon w)$, with these changes of variables  system (\ref{s1}) can be written as 
\begin{equation}\label{p2}
 \begin{aligned}
\dot{x} &= 2 c (x-y) - a_{1} (x-y) \varepsilon, \\ 
\dot{y} &= \dfrac{c^{2} x + d^{2} x - c^{2} y - c d z}{c} - w x \varepsilon, \\ 
\dot{z} &= d y -  c z, \\
\dot{w} &= \dfrac{b_{1} (c^{2} + d^{2} - c e)}{c} + (- b_1 w + x y) \varepsilon. \\ 
 \end{aligned}
\end{equation}
After the linear change in variables $(x,y,z,w) \mapsto (X, Y, Z, W)$, 
\begin{equation}\label{p3}
 \begin{aligned}
x &= \dfrac{-6d^{2}X+2c^{2}(3W+X)-2c\sqrt{-c^{2}+3d^{2}}Y}{3(c^{2}+3d^{2})}, \\ 
y &= - \dfrac{1}{3(c^{3}-3cd^{2})} \Big( 3 c d^{2} X - c^{3} (6 W + X) + \sqrt{- c^{2} + 3 d^{2}} (c^{2} + 3 d^{2})  Y \Big), \\
z &= \dfrac{d \Big( - 3 d^{2} X + c^{2} (-6 W + X) + 2 c \sqrt{-c^{2} + 3 d^{2}} Y \Big)}{- 3 c^{3} + 9 c d^{2}}, \\ 
w &= Z. 
 \end{aligned}
\end{equation}
the linear part at the origin of system (\ref{p2}) for $\varepsilon = 0$ can be transformed into its real Jordan normal form,
$$\left( \begin{array}{cccc}
0 & - \sqrt{3d^{2}-c^{2}} & 0 & 0 \\
\sqrt{3d^{2}-c^{2}} & 0 & 0 & 0 \\
0 & 0 & 0 & 0 \\
0 & 0 & 0 & 0
\end{array}\right).$$
Under the change in variable (\ref{p3}), where we have written $(x,y,z,w)$ instead of $(X,Y,Z,W)$ the system (\ref{p2}) can be written as
\begin{equation}\label{p4}
 \begin{aligned}
\dot{x} &= \dfrac{1}{3} \Big( - a_{1} x \varepsilon + \dfrac{\sqrt{-c^{2}+3d^{2}}y(-3c+a_{1}\varepsilon)}{c}\Big), \\ 
\dot{y} &= \dfrac{1}{3} \Bigg( - 2 a_{1} y \varepsilon + 6 c^{2} \Big( \dfrac{3 c w}{(- c^{2} + 3 d^{2})^{3/2}} + \dfrac{y}{c^{2}-3d^{2}} \Big) z \varepsilon + \dfrac{x \big( - 3 c^{2} + 9 d^{2} + 2 c (a_{1} - 3 z) \varepsilon \big) }{\sqrt{- c^{2} + 3 d^{2}}}  \Bigg), \\
\dot{z} &= b_{1} \Big( c+\dfrac{d^{2}}{c}-e \Big) + \dfrac{2 \varepsilon}{9 c (c^{2}-3d^{2})^{2}} \Big( - 3 d^{2} x + c^{2} (3 w + x) - c \sqrt{- c^{2} + 3 d^{2}} y \Big) \Big( - 3 c d^{2} x + 		\\ &\quad 	c^{3} (6 w + x) - (c^{2} + 3 d^{2}) \sqrt{- c^{2} + 3 d^{2}} y \Big) - b_{1} z \varepsilon, \\ 
\dot{w} &= \dfrac{1}{6} \Bigg( \dfrac{a_{1} (c^{2} + d^{2}) \Big( c x - \sqrt{- c^{2} + 3 d^{2}} y \Big)}{c^{3}} + \dfrac{4 \Big( 3 d^{2} x - c^{2}(3 w + x) + c \sqrt{- c^{2} + 3 d^{2}} y \Big) z}{c^{2} - 3 d^{2}} \Bigg) \varepsilon. 
 \end{aligned}
\end{equation}
Performing the cylindrical change of variables, 
\begin{equation}\label{p5}
 \begin{aligned}
(x,y,z,w) \mapsto (r \cos \theta , r \sin \theta , z , w)
 \end{aligned}
\end{equation}

the system (\ref{p4}) becomes 

\begin{equation}\label{p6} 
 \begin{aligned}  
\dfrac{d r}{d \theta} &= - \dfrac{\varepsilon}{3 c (c^{2}-3d^{2})^{2}} \Bigg( a_{1}c(-c^{2}+3d^{2})^{3/2} r \cos^{2} + (c^{2}-3d^{2}) r \Big( a_{1} (c^{2}+3d^{2})-6c^{2}z \Big)   	\\
&\quad  	 \cos \theta \sin \theta + 2 c \sin \theta \Big( - 9 c^{3} w z + \sqrt{-c^{2}+3d^{2}} r (- a_{1} c^{2} + 3 a_{1} d^{2} + 3 c^{2} z) \sin \theta \Big) \Bigg) + O(\varepsilon^{2})  \\
	&= \varepsilon F_{1} (\theta, r, z, w) + O(\varepsilon^{2}),  \\
\dfrac{d z}{d \theta} &= \dfrac{\varepsilon}{9 \sqrt{-c^{2}+3d^{2}} (c^{3}-3cd^{2})^{2} r} \Bigg( 3 c^{2} r \Big( - 2 d^{2} (c^{2}-3d^{2}) r^{2} + 12 c^{4} w^{2} - 3 b_{1} (c^{2}-3d^{2})^{2} z \Big)  	 	\\
&\quad  	 + 18 c^{4} w \Big( (c^{2}-3d^{2}) r^{2} - 3 b_{1} (c^{2}+d^{2}-ce) z \Big) \cos \theta + 6 b_{1} c^{2} (c^{2}-3d^{2}) (c^{2}+d^{2}-ce) r  	\\
&\quad  	    (a_{1}-3z) \cos^{2} \theta + 2 c^{4} (c^{2}-3d^{2}) r^{3} \cos 2 \theta + c \sqrt{-c^{2}+3d^{2}} r \Big( - (c^{2}-3d^{2}) \big( 3 a_{1} b_{1} (c^{2} +     	\\
&\quad  	 d^{2}-ce) + 2 (2c^{2}+3d^{2}) r^{2} \big) + 18 b_{1} c^{2} (c^{2}+d^{2}-ce) z \Big) \cos \theta \sin \theta + 3 r \sin \theta \Big( - 6 c^{3} (c^{2}	 \\
&\quad  	  + d^{2}) \sqrt{-c^{2}+3d^{2}} r w + a_{1} b_{1} (c^{2}-3d^{2})^{2}    	(c^{2}+d^{2}-ce) \sin \theta \Big) \Bigg) + O(\varepsilon^{2})\\
&= \varepsilon F_{2} (\theta, r, z, w) + O(\varepsilon^{2}),  \\	
\dfrac{d w}{d \theta} &= - \dfrac{\varepsilon}{6 c^{3} (- c^{2}+3d^{2})^{3/2}} \Bigg( \sqrt{-c^{2}+3d^{2}} r \Big( - a_{1} (c^{2}-3d^{2}) (c^{2}+d^{2}) + 4 c^{4} z \Big) \sin \theta      	\\
&\quad  	  - 12 c^{5} w z + c (c^{2}-3d^{2}) r \Big(  a_{1} (c^{2}+d^{2}) - 4 c^{2} z \Big) \cos \theta	 \Bigg) + O(\varepsilon^{2})\\		
	&= \varepsilon F_{3} (\theta, r, z, w) + O(\varepsilon^{2}). 
 \end{aligned}
\end{equation}
System (\ref{p6}) is written in the normal form (\ref{aver-1}) for applying the averaging theory and satisfies all the assumptions of Theorem \ref{teor3} . Then, using the notations of the averaging theory described in Theorem \ref{teor3}, we have $t=\theta$, $T=2\pi$, $x=(r,z,w)$,
$$
F(\theta, r, z, w) = \left( \begin{array}{c}
F_{1}(\theta, r, z, w) \\
F_{2}(\theta, r, z, w) \\
F_{3}(\theta, r, z, w)
\end{array}\right)
\mbox{,} \hspace{0.2cm} and \hspace{0.2cm}
f(r, z, w) = \left( \begin{array}{c}
f_{1}(r, z, w) \\
f_{2}(r, z, w) \\
f_{3}(r, z, w)
\end{array}\right)
$$
Then we compute the integrals, i.e.
\begin{equation*}
 \begin{aligned}
f_{1}(r, z, w) &= \frac{1}{2 \pi} \int^{2 \pi}_{0} F_{1}(\theta, r, z, w) d \theta = \frac{r \Big( a_{1} (c^{2}-3d^{2}) - 2 c^{2} z \Big)}{2 (-c^{2}+3d^{2})^{3/2}} , \\ 
f_{2}(r, z, w) &= \frac{1}{2 \pi} \int^{2 \pi}_{0} F_{2}(\theta, r, z, w) d \theta \\
			  &= \frac{1}{6 \sqrt{-c^{2}+3d^{2}} (c^{3}-3cd^{2})^{2}} \Bigg(  3 a_{1} b_{1} (c^{4}-4c^{2}d^{2}+3d^{2})(c^{2}+d^{2}+3d^{4}) 2c^{2} 	\\ &\quad 	 + \Big( - 2 d^{2} (c^{2}-3d^{2}) r^{2} + 12 c^{4} w^{2} - 3 b_{1} (c^{2}-3d^{2}) (2 c^{2}-2 d^{2} - c e) z \Big) \Bigg) , \\ 
f_{3}(r, z, w) &= \frac{1}{2 \pi} \int^{2 \pi}_{0} F_{3}(\theta, r, z, w) d \theta = \frac{2 c^{2} w z}{(-c^{2}+3d^{2})^{3/2}} .
 \end{aligned} 
\end{equation*}
Solving the equations $f_{1}(r, z, w) = f_{2}(r, z, w) = f_{3}(r, z, w) = 0$, we can get the following five solutions:
\begin{equation*}
 \begin{aligned}
s_{1} &= \Bigg( 0, \hspace{0.2cm} \dfrac{a_{1}(c^{2}-d^{2})(c^{2}+d^{2}-ce)}{2c^{2}(2c^{2}-2d^{2}-ce)}, \hspace{0.2cm} 0 \Bigg), \\
s_{2,3} &= \Bigg( \pm \dfrac{\sqrt{3} \sqrt{a_{1}} \sqrt{- b_{1} \Big( c^{4} - 8 c^{2} d^{2} + 7 d^{4} + 2 c d^{2} e \Big)}}{2cd} , \hspace{0.2cm} \dfrac{1}{2} a_{1} \Big( 1 - \dfrac{3d^{2}}{c^{2}} \Big) , \hspace{0.2cm} 0 \Bigg), \\
s_{4,5} &= \Bigg( 0, \hspace{0.2cm} 0 , \hspace{0.2cm} \pm \dfrac{\sqrt{a_{1}} \sqrt{-b_{1} \Big( c^{4} - 4 c^{2} d^{2} + 3 d^{4} \Big) \Big( c^{2} + d^{2} - c e \Big)}}{2 \sqrt{2} c^{3}} \Bigg).
  \end{aligned}
\end{equation*}
The solution $ s_{j}$, $j=1,\ldots, 5$ exist  if only if $c \neq 0$, $d \neq 0$, and $2 (c^2-d^2)-ce \neq 0$. On the other hand, the solution  $s_{1} \neq (0,0,0)$ if only if $(c^2-d^2)(c^2+d^2-c e) \neq 0$, and the solutions $s_{2,3}$  and $s_{4,5}$ are real if only if $c^{4} - 8 c^{2} d^{2} + 7 d^{4} + 2 c d^{2} e <0$ and $ (c^{4} - 4 c^{2} d^{2} + 3 d^{4})(c^{2} + d^{2} - c e )<0$.

For the five solutions, we get

\begin{equation*}
 \begin{aligned}
\det \Big( \dfrac{\partial f}{\partial \mathbf{x}} (s_{1}) \Big) &=  \dfrac{a_{1}^{2} b_{1} (c^{2}-d^{2}) (c^{2}+d^{2}-ce) (c^{4}-8 c^{2} d^{2} + 7 d^{4} + 2 c d^{2} e)}{2 (-c^{2}+3d^{2})^{9/2} (2c^{2}-2d^{2}-ce)}, \\ 
\det \Big( \dfrac{\partial f}{\partial \mathbf{x}} (s_{2}) \Big) &= \det \Big( \dfrac{\partial f}{\partial \mathbf{x}} (s_{3}) \Big) \\
&= \dfrac{a_{1}^{2} b_{1} \big( c^{4} - 8 c^{2} d^{2} + 7 d^{2} + 2 c d^{2} e \big)}{(-c^{2}+3d^{2})^{7/2}}, \\
\det \Big( \dfrac{\partial f}{\partial \mathbf{x}} (s_{4}) \Big) &= \det \Big( \dfrac{\partial f}{\partial \mathbf{x}} (s_{5}) \Big) \\
&= \dfrac{a_{1}^{2} b_{1} (c^{2}-d^{2}) \big( c^{2} + d^{2} - 2 c e \big)}{(-c^{2}+3d^{2})^{7/2}}.
  \end{aligned}
\end{equation*}

When $a_1 \neq 0$, $b_1 \neq 0$  and $3 d^2-c^2>0$ then $ \det \Big( \dfrac{\partial f}{\partial \mathbf{x}} (s_{j}) \Big) \neq 0 $, for each $j=1, \ldots, 5$.  Then according to Theorem \ref{teor3}, we see that the system (\ref{p6}) has one periodic solution $\mathbf{x}_{j}(\theta , \varepsilon)$ such that $\mathbf{x}_{j}(0 , \varepsilon) = s_{j} + O(\varepsilon)$, for each $j=1, \ldots, 5$. Bring the solution back to the system (\ref{p4}), and we have one periodic solution 
$\Phi_{j}(t,\varepsilon) = \big( X_{j}(t,\varepsilon), Y_{j}(t,\varepsilon), Z_{j}(t,\varepsilon), W_{j}(t,\varepsilon) \big)$.
Then the system (\ref{p2}) has the periodic solution $\varepsilon \Phi_{j}(t,\varepsilon)$, $j=1,\ldots,5$.

To determine the stability of the periodic solution one needs to calculate eigenvalues of the Jacobian matrix $\partial F (s_{j})/\partial x$,$\quad j=1, \ldots, 5$.

\medskip

The Jacobian matrices $\partial F (s_{1})/\partial x$ have the same characteristic equation, 
\begin{equation*}
 \begin{aligned}
\lambda^{3} + \Theta_{1} \lambda^{2} - \Theta_{2} \lambda - \Theta_{3}
  \end{aligned}
\end{equation*}
where $\Theta_{1}$, $\Theta_{2}$ and $\Theta_{3}$ are 
\begin{equation*}
 \begin{aligned}
\Theta_{1} &= \dfrac{a_{1} \Big( - 3 c^{4} + 8 c^{2} d^{2} - 5 d^{4} + 2 c^{3} e - 4 c d^{2} e \Big) - 2 b_{1} \Big( 2 d^{2} + c (- 2 c + e)\Big)^{2}}{2 (-c^{2}+3d^{2})^{3/2} (2 c^{2}-2 d^{2} - c e)}, \\
\Theta_{2} &= \dfrac{a_{1} }{{2 (c^{2}-3d^{2})^{3} \Big(2 d^{2} + c (- 2 c + e) \Big)^{2}}} \Big( a_{1} (c^{2}-d^{2}) (c^{2}+d^{2}-ce) (c^{4}-8c^{2}d^{2}+7d^{4} \\ 
& + 2cd^{2}e) + b_{1} (3 c^{4}-8c^{2}d^{2}+5d^{4}-2c^{3}e+4cd^{2}e) \big( 2d^{2}+c(-2c+e) \big)^{2} \Big), \\
\Theta_{3} &= \dfrac{a_{1}^{2} b_{1} (c^{2}-d^{2}) (c^{2}+d^{2}-ce) (c^{4}-8c^{2}d^{2}+7d^{4}+2cd^{2}e)}{2 (-c^{2}+3d^{2})^{9/2} (2 c^{2}-2 d^{2} - c e)}.
  \end{aligned}
\end{equation*}
The eigenvalues are  given as follows:
\begin{equation*}
 \begin{aligned}
\lambda_{1} &= - \dfrac{b_{1} \Big( 2d^{2}+c(-2c+e) \Big)}{(-c^{2}+3d^{2})^{3/2}}, \hspace{0,2cm} \lambda_{2} = \dfrac{a_{1} (c^{4}-8c^{2}d^{2}+7d^{4}+2cd^{2}e)}{2(-c^{2}+3d^{2})^{3/2} (2c^{2}-2d^{2}-ce)}, \hspace{0,2cm} \\ 
\lambda_{3} &= \dfrac{a_{1} \Big( - c^{4}+d^{4}+c^{3}e-c d^{2}e \Big)}{(-c^{2}+3d^{2})^{3/2} \Big( 2d^{2}+c(-2c+e) \Big)}.
  \end{aligned}
\end{equation*}

\medskip

The Jacobian matrices $\partial F (s_{2})/\partial x$ and $\partial F (s_{3})/\partial x$ have the same characteristic equation, 
\begin{equation*}
 \begin{aligned}
\lambda^{3} - \Upsilon_{1} \lambda^{2} - \Upsilon_{2} \lambda - \Upsilon_{3}
  \end{aligned}
\end{equation*}
where $\Upsilon_{1}$, $\Upsilon_{2}$ and $\Upsilon_{3}$ are 
\begin{equation*}
 \begin{aligned}
\Upsilon_{1} &= \dfrac{a_{1}(c^{2}-3d^{2}) + b_{1}(2c^{2}-2d^{2}-ce)}{(-c^{2}+3d^{2})^{3/2}} \\
\Upsilon_{2} &= \dfrac{a_{1} b_{1} (c^{2}-d^{2}) (c^{2}+d^{2}-ce)}{(c^{2}-3d^{2})^{3}}\\
\Upsilon_{3} &= \dfrac{a_{1}^{2} b_{1} (c^{4}-8c^{2}d^{2}+7d^{4}+2cd^{2}e)}{(-c^{2}+3d^{2})^{7/2}}
  \end{aligned}
\end{equation*}
The eigenvalues are  given as follows:
\begin{equation*}
 \begin{aligned}
\widehat{\lambda_{1}} &= - \dfrac{a_{1}}{\sqrt{-c^{2}+3d^{2}}}, \\ 
\widehat{\lambda_{2,3}} &=  \resizebox{0.9\hsize}{!}{$ - \dfrac{2b_{1}d^{2}+b_{1}c(-2c+e) \pm i \sqrt{b_{1} \Big( -4a_{1} (c^{4}-8c^{2}d^{2}+7d^{4}+2cd^{2}e) - b_{1} \big( 2d^{2}+c(-2c+e) \big)^{2} \Big)}}{2(-c^{2}+3d^{2})^{3/2}} $}.
  \end{aligned}
\end{equation*}

The Jacobian matrices $\partial F (s_{4})/\partial x$ and $\partial F (s_{5})/\partial x$ have the same characteristic equation, 
\begin{equation*}
 \begin{aligned}
\lambda^{3} - \Gamma_{1} \lambda^{2} + \Gamma_{2} \lambda - \Gamma_{3} 
  \end{aligned}
\end{equation*}
where $\Gamma_{1}$, $\Gamma_{2}$ and $\Gamma_{3}$ are 
\begin{equation*}
 \begin{aligned}
\Gamma_{1} &= \dfrac{a_{1} c^{2} + 4 b_{1} c^{2} - 3 a_{1} d^{2} - 4 b_{1} d^{2} - 2 b_{1} c e}{2(-c^{2}+3d^{2})^{3/2}}, \\
\Gamma_{2} &= \dfrac{a_{1} b_{1} (2 c^{4} + 8c^{2}d^{2} - 10d^{4} - 3 c^{3} e + c d^{2} e)}{2(c^{2}-3d^{2})^{3}}, \\
\Gamma_{3} &= \dfrac{a_{1}^{2} b_{1} (c^{2}-d^{2}) (c^{2}+d^{2}-ce)}{(-c^{2}+3d^{2})^{7/2}}.
  \end{aligned}
\end{equation*}
The eigenvalues are  given as follows:
\begin{equation*}
 \begin{aligned}
\widetilde{\lambda_{1}} &= - \dfrac{a_{1}}{2 \sqrt{-c^{2}+3d^{2}}}, \\ 
\widetilde{\lambda_{4,5}} &= \resizebox{0.9\hsize}{!}{$ - \dfrac{2b_{1}d^{2}+b_{1}c(-2c+e) \pm  i \sqrt{b_{1} \Big( 8 a_{1} (- c^{4}+d^{4}+c^{3}e-cd^{2}e) - b_{1} \big( 2d^{2}+c(-2c+e) \big)^{2} \Big)}}{2(-c^{2}+3d^{2})^{3/2}} $}.
  \end{aligned}
\end{equation*}

We have that $\lambda_{1},\widehat{\lambda_{1}},\widetilde{\lambda_{1}} $ is real and $\lambda_{2,3},\widehat{\lambda_{2,3}} ,\widetilde{\lambda_{4,5}}$ are complex numbers if $16 \eta +3 b_1 \omega^2<0$  and $4 \eta_{1}+3 b_1 \omega^2<0$. In this case, since that $a_1>0, b_1>0$, $(c^{4}-8c^{2}d^{2}+7d^{4}+2cd^{2}e)<0$, $2c^{2}-2d^{2}-ce<0$ and $c^4-d^4-c^3 e+ cd^2 e >0$ , then  this implies that the periodic orbits $\varepsilon \Phi_{j}(t,\varepsilon) $, $j\in \lbrace 1, \ldots 5 \rbrace$ are stable. 
\end{proof}


\begin{proof}[ Proof. of statement (iii) of Theorem \ref{teor2}]


Let 

$$(a,b,d) = (- 2 c + \varepsilon a_{1} , \varepsilon b_{1}, - \dfrac{\sqrt{c^{2}+\omega^{2}}}{\sqrt{3}} + \varepsilon d_{1}),$$

where $\omega > 0$ and $\varepsilon > 0$ are sufficiently small parameter. Them, we translate $\mathtt{p_{\pm}}$ to the origin the coordinates doing system (\ref{s1}) becomes $(x,y,z,w)=(\overline{x},\overline{y},\overline{z},\overline{w})+ \mathtt{p_{\pm}}$, then we introduce the scaling of variables $(\overline{x},\overline{y},\overline{z},\overline{w}) = (\varepsilon x,\varepsilon y,\varepsilon z,\varepsilon w)$, with these changes of variables  system (\ref{s1}) can be written as 
%
%

\begin{equation} \label{pp2}
 \begin{aligned} 
\dot{x} &= (x-y) (2c-a_{1} \varepsilon), \\ 
\dot{y} &= \dfrac{1}{3} \Bigg( c(4x-3y)-3wx \varepsilon - z \Big( 3 d_{1} \varepsilon + \sqrt{3} \sqrt{c^{2}+\omega^{2}} \Big) + \dfrac{b_{1} w \sqrt{\varepsilon} }{\sqrt{c} \sqrt{-b_{1}(4c^{2}-3ce+\omega^{2})}} \Big( 3 d_{1} \\
&\quad \varepsilon \sqrt{c^{2}+\omega^{2}} + \sqrt{3} (4 c^{2} - 3 c e + \omega^{2}) \Big)+ \dfrac{x \Big( \omega^{2} + d_{1} \varepsilon \big( 3 d_{1} \varepsilon + 2 \sqrt{3} \sqrt{c^{2}+\omega^{2}}\big) \Big)}{c} \Bigg), \\ 
\dot{z} &= - c z + d_{1} y \varepsilon - \dfrac{y \sqrt{c^{2}+\omega^{2}}}{\sqrt{3}},  \\
\dot{w} &= (-b_{1}w+xy) \varepsilon - \dfrac{b_{1}d_{1}(x+y)\varepsilon^{3/2}\sqrt{c^{2+\omega^{2}}}}{\sqrt{c}\sqrt{-b_{1}(4c^{2}-3ce+\omega^{2})}} + \dfrac{(x+y)\sqrt{\varepsilon}\sqrt{-b_{1}(4c^{2}-3ce+\omega^{2})}}{\sqrt{3}\sqrt{c}}. 
  \end{aligned}
\end{equation}

After the linear change in variables $(x,y,z,w) \mapsto (X, Y, Z, W)$, 

\begin{equation}\label{pp3}
 \begin{aligned}
x &= \dfrac{2 c \Big( - \sqrt{3} c Z + c Y \omega + X \omega^{2} \Big)}{\sqrt{3} \omega^{2}}, \\ 
y &= \dfrac{c X \omega^{2} + Y \omega^{3} + c^{2} \Big(- 2 \sqrt{3} Z + 2 Y \omega \Big)}{\sqrt{3} \omega^{2}}, \\
z &= \dfrac{\sqrt{c^{2}+\omega^{2}} \Big( 2 \sqrt{3} c Z - 2 c Y \omega + X \omega^{2} \Big)}{3 \omega^{2}}, \\ 
w &= W . 
  \end{aligned}
\end{equation}

the linear part at the origin of system (\ref{pp2}) for $\varepsilon = 0$ can be transformed into its real Jordan normal form,

$$\left( \begin{array}{cccc}
0 & - \omega & 0 & 0 \\
\omega & 0 & 0 & 0 \\
0 & 0 & 0 & 0 \\
0 & 0 & 0 & 0
\end{array}\right).$$

Under the change in variable (\ref{pp3}), where we have written $(x,y,z,w)$ instead of $(X,Y,Z,W)$ the system (\ref{pp2}) can be written as
\begin{equation}\label{pp4}
 \begin{aligned}
\dot{x} &= - y \omega + \dfrac{1}{3} \varepsilon \Bigg( a_{1} \Big( - x + \dfrac{y \omega}{c} \Big) + \dfrac{- 6 c^{2} d_{1} z + \sqrt{3} d_{1} \omega ( 2 c^{2} y + c x \omega + y \omega^{2})}{\omega^{2} \sqrt{c^{2}+\omega^{2}}} \Bigg), \\ 
\dot{y} &= \dfrac{1}{3 \omega^{3}} \Bigg( 3(-2cw+\omega^{2}) x \omega^{2} + 6 c^{2} w (\sqrt{3} z - y \omega) + 6 d_{1}^{2} \varepsilon^{2} (- \sqrt{3} c z + c y \omega + x \omega^{2}) - \dfrac{3 w \omega^{2}}{\sqrt{c}} 	 	\\&\quad  	  \sqrt{- b_{1} \varepsilon \Big( 4 c^{2} - 3 c e + \omega^{2} + d_{1} \varepsilon \big( 3 d_{1} \varepsilon - 2 \sqrt{3} \sqrt{c^{2}+\omega^{2}} \big) \Big)}  - \dfrac{\varepsilon \omega^{2}}{\sqrt{c^{2}+\omega^{2}}} \Big( 4 \sqrt{3} c^{2} d_{1} x + c	 	\\&\quad  	   \big( - 6 d_{1} z + \sqrt{3} d_{1} y \omega - 2 a_{1} x \sqrt{c^{2}+\omega^{2}} \big) + \omega \big( 5 \sqrt{3} d_{1} x \omega + 2 a_{1} y \sqrt{c^{2}+\omega^{2}} \big) \Big) \Bigg), \\
\dot{z} &= - \dfrac{2 c w x}{\sqrt{3}} + \dfrac{2 c^{2} w z}{\omega^{2}} - \dfrac{2 c^{2} w y}{\sqrt{3} \omega} + \dfrac{2 d_{1}^{2} \varepsilon^{2} \big( - 3 c z + \sqrt{3} \omega (c y + x \omega) \big)}{3 \omega^{2}} + \dfrac{\varepsilon}{18 c^{2}} \Big( 6 c d_{1} (- 4 c x + y \omega) 	 	\\&\quad  	   \sqrt{c^{2}+\omega^{2}} + \sqrt{3} a_{1} (c x - y \omega) (4 c^{2} + \omega^{2}) \Big) - \dfrac{w}{\sqrt{c}} \sqrt{- b_{1} \varepsilon \Big( c^{2} -  c e + \big( d_{1} \varepsilon - \dfrac{\sqrt{c^{2}+\omega^{2}}}{\sqrt{3}} \big)^{2} \Big)}, \\ 
\dot{w} &= - b_{1} w \varepsilon + \dfrac{1}{3 \sqrt{c}} \Bigg( 2 c^{5/2} \Big( x^{2} + y \big( y - \dfrac{\sqrt{3} z}{\omega} \big) \Big) + 2 c^{3/2} x y \omega + \dfrac{6 c^{7/2} x (- \sqrt{3} z + y \omega)}{\omega^{2}} + \dfrac{4 c^{9/2}}{\omega^{4}}   	\\&\quad  	  \big( 3 z^{2} - 2 \sqrt{3} y z \omega + y^{2} \omega^{2} \big) + 3 c x \sqrt{- b_{1} \varepsilon \Big( 4 c^{2} - 3 c e + \omega^{2} + d_{1} \varepsilon \big( 3 d_{1} \varepsilon - 2 \sqrt{3} \sqrt{c^{2}+\omega^{2}} \big) \Big)} +  	\\&\quad  	   y \omega \sqrt{ - b_{1} \varepsilon \Big( 4 c^{2} - 3 c e + \omega^{2} + d_{1} \varepsilon \big( 3 d_{1} \varepsilon - 2 \sqrt{3} \sqrt{c^{2}+\omega^{2}} \big) \Big)} + \dfrac{4 c^{2} (- \sqrt{3} z + y \omega)}{\omega^{2}}  	\\&\quad  	\sqrt{- b_{1} \varepsilon \Big( 4 c^{2} - 3 c e + \omega^{2} + d_{1} \varepsilon \big( 3 d_{1} \varepsilon - 2 \sqrt{3} \sqrt{c^{2}+\omega^{2}} \big) \Big)}   \Bigg) . 
  \end{aligned}
\end{equation}

Performing the cylindrical change of variables, 
\begin{eqnarray*}
(x,y,z,w) \mapsto (r \cos \theta , r \sin \theta , z , w)
\end{eqnarray*}
system (\ref{pp4}) becomes 
\begin{equation}\label{pp6} 
	\begin{aligned}  
\dfrac{d r}{d \theta} &= \dfrac{1}{3cr \omega^{4} \sqrt{c^{2}+\omega^{2}}} \Bigg( - a_{1} c r^{2} \omega^{3} \sqrt{c^{2}+\omega^{2}} \cos^{2} \theta + \cos \theta \bigg( - 6 c^{3} d_{1} r z \omega 	\\ 
&\quad	+ \Big( - 2 \sqrt{3} c^{3} d_{1} r^{2} \omega^{2} +  \omega^{2} \sqrt{c^{2}+\omega^{2}} \big( 3 b_{1} w^{2} + a_{1} r^{2} \omega^{2} \big) + 2 c^{2}  \sqrt{c^{2}+\omega^{2}} \big( 6 b_{1} w^{2}\\ 
&\quad  + r^{2} (a_{1}-3w) \omega^{2} \big) - c \big( 4 \sqrt{3} d_{1} r^{2} \omega^{4}+	9 b_{1} e w^{2} \sqrt{c^{2}+\omega^{2}} \big) \Big) \sin \theta \bigg) + c r \bigg( \sqrt{3} c d_{1} \\ 
&\quad r \omega^{3} \cos 2 \theta + 6 c z \Big( d_{1} \omega^{2} + \sqrt{3} c w \sqrt{c^{2}+\omega^{2}} \Big) \sin \theta - 2 r \omega \sqrt{c^{2}+\omega^{2}} (3 c^{2} w \\
&\quad + a_{1} \omega^{2}) \sin^{2} \theta \bigg) \Bigg) , \\ 
\dfrac{d z}{d \theta} &= \dfrac{1}{18 c^{2} r \omega^{3}} \Bigg( c \bigg( 6 \sqrt{3} b_{1} w^{2} (4 c^{2}-3ce+\omega^{2}) + r^{2} \omega^{2} \Big( - 24 c d_{1} \sqrt{c^{2}+\omega^{2}} \\
 &\quad + \sqrt{3} \big( 4 c^{2} (a_{1}-3w) +  a_{1} \omega^{2} \big) \Big) \bigg) \cos \theta + r \bigg( 36 c^{4} w z + r \omega \Big( 6 c d_{1} \omega^{2} \sqrt{c^{2}+\omega^{2}} \\
 &\quad - \sqrt{3} \big( 12 c^{4} w + 4 a_{1} c^{2} \omega^{2} + a_{1} \omega^{4} \big) \Big) \sin \theta \bigg) \Bigg) , \\ 
\dfrac{d w}{d \theta} &= \dfrac{1}{3 c r \omega^{5}} \Bigg( 12 c^{5} r z^{2} + c r (2 c^{2} r^{2} - 3 b_{1} w) \omega^{4} + c^{2} r^{2} \omega \Big( 6 c^{2} \omega \cos \theta (- \sqrt{3} z + r \omega \sin \theta)  \\
 &\quad	 	c z (4 c^{2}+\omega^{2}) + r \omega^{4} \cos \theta + 2 c^{3} r \omega \sin \theta \big) \Big) - b_{1} w (4c^{2}-3ce+\omega^{2}) \cos \theta \Big( - 4 \sqrt{3} c^{2} z 	 	\\
  &\quad	+ r \omega \big( 3 c \omega \cos \theta    + (4 c^{2} + \omega^{2}) \sin \theta \big) \Big) \Bigg)  . 
	\end{aligned}
\end{equation}
System (\ref{pp6}) is written in the normal form (\ref{aver-1}) for applying the averaging theory and satisfies all the assumptions of Theorem \ref{teor3}. Then, using the notations of the averaging theory described in Theorem \ref{teor3}, we have $t=\theta$, $T=2\pi$, $x=(r,z,w)$,
$$
F(\theta, r, z, w) = \left( \begin{array}{c}
F_{1}(\theta, r, z, w) \\
F_{2}(\theta, r, z, w) \\
F_{3}(\theta, r, z, w)
\end{array}\right)
\mbox{,} \hspace{0.2cm} and \hspace{0.2cm}
f(r, z, w) = \left( \begin{array}{c}
f_{1}(r, z, w) \\
f_{2}(r, z, w) \\
f_{3}(r, z, w)
\end{array}\right).
$$
Then we compute the integrals, i.e.
\begin{equation*}
 \begin{aligned}
f_{1}(r, z, w) &= - \dfrac{r (2 c^{2} w + a_{1} \omega^{2})}{2 \omega^{3}} , \\ 
f_{2}(r, z, w) &= \dfrac{2 c^{2} w z}{\omega^{3}} , \\ 
f_{3}(r, z, w) &= \dfrac{24 c^{4} z^{2} + c (4 c^{3} r^{2} - 12 b_{1} c w + 9 b_{1} e w) \omega^{2} + (4 c^{2} r^{2} - 9 b_{1} w) \omega^{4}}{6 \omega^{5}} .
 \end{aligned} 
\end{equation*}

Solving the equations $f_{1}(r, z, w) = f_{2}(r, z, w) = f_{3}(r, z, w) = 0$, we can get the following three solutions:

\begin{equation*}
 \begin{aligned}
s_{0} &= \big( 0, \hspace{0.2cm} 0, \hspace{0.2cm} 0 \big), \\
s_{1,2} &= \Bigg( \pm \dfrac{1}{2c^{2}} \sqrt{\dfrac{3}{2}} \sqrt{a_{1}} \omega \sqrt{\dfrac{b_{1} (- 4 c^{2} + 3 c e - 3 \omega^{2})}{c^{2} + \omega^{2}}} , \hspace{0.2cm} 0 , \hspace{0.2cm} - \dfrac{a_{1} \omega^{2}}{2 c^{2}} \Bigg), \\
  \end{aligned}
\end{equation*}

For two solutions, we get

\begin{equation*}
 \begin{aligned}
\det \Big( \dfrac{\partial f}{\partial \mathbf{x}} (s_{1}) \Big) &= \det \Big( \dfrac{\partial f}{\partial \mathbf{x}} (s_{2}) \Big) \\
&= \dfrac{a_{1}^{2} b_{1} \big( 4 c^{2} - 3 c e + 3 \omega^{2} \big)}{2 \omega^{5}}.
  \end{aligned}
\end{equation*}

When $c\neq 0$, $a_{1} \neq 0$, and $\kappa=b_{1}(4 c^2-3 ce+3 \omega^2)< 0$ then $\det \Big( \frac{\partial f}{\partial \mathbf{x}} (s_{j}) \Big) \neq 0$, $j=1,2$. 
Then according to Theorem \ref{teor3}, we see that the system (\ref{pp6}) has one periodic solution $\mathbf{x}_{j}(\theta , \varepsilon)$ such that $\mathbf{x}_{j}(0 , \varepsilon) = s_{j} + O(\varepsilon)$, $j=1,2$. Bring the solution back to the system (\ref{pp4}), and we have one periodic solution
 $\Phi_{j}(t,\varepsilon) = \big( X_{j}(t,\varepsilon), Y_{j}(t,\varepsilon), Z_{j}(t,\varepsilon), W_{j}(t,\varepsilon) \big)$. Then the system (\ref{pp2}) has the periodic solution $\varepsilon \Phi_{j}(t,\varepsilon)$, $j=1,2$. 

\medskip

The Jacobian matrices $\partial F (s_{1})/\partial x$ have the same characteristic equation, 
\begin{equation*}
 \begin{aligned}
\lambda^{3} + \dfrac{b_{1} c (4 c - 3 e) + (2 a_{1} + 3 b_{1}) \omega^{2}}{2 \omega^{3}} \lambda^{2} - \dfrac{a_{1}^{2} b_{1} (4 c^{2} - 3 c e + 3 \omega^{2})}{2 \omega^{5}}.
  \end{aligned}
\end{equation*}

The eigenvalues are  given as follows:
\begin{equation*}
 \begin{aligned}
\lambda_{1} &= - \dfrac{a_{1}}{\omega},  \\
\lambda_{2} &=  - \dfrac{1}{4 \omega^{3}} \Bigg( b_{1} (4 c^{2}-3ce+3\omega^{2}) +  \sqrt{b_{1} (4 c^{2} - 3 c e + 3 \omega^{2}) \Big( b_{1} c (4 c - 3 e) + (8 a_{1} + 3 b_{1}) \omega^{2} \Big)} \Bigg) , \hspace{0,2cm} \\ 
\lambda_{3} &=  - \dfrac{1}{4 \omega^{3}} \Bigg( b_{1} (4 c^{2}-3ce+3\omega^{2}) -  \sqrt{b_{1} (4 c^{2} - 3 c e + 3 \omega^{2}) \Big( b_{1} c (4 c - 3 e) + (8 a_{1} + 3 b_{1}) \omega^{2} \Big)} \Bigg).
  \end{aligned}
\end{equation*} 

We have that $\lambda_1$  and $\lambda_{2,3}=- \dfrac{1}{4 \omega^{3}} (\kappa \pm \sqrt{\kappa(\kappa+8 a_1 \omega^2)})$ are reals, if $a_1>0$, $\kappa>0$ and regardless of the sign assumed by $\kappa(\kappa+8 a_1 \omega^2)$, at least one of the eigenvalues has a positive real part. In this case, the periodic solution $\varepsilon \Phi_{j}(t,\varepsilon)$, $j=1,2$ is unstable . 
\end{proof}


%
%

\smallskip


\section*{Acknowledgments}

The first author was financed in part by the Coordenação de Aperfeiçoamento de Pessoal de Nível Superior - Brasil (CAPES) - Finance Code 001.


\bibliographystyle{acm}
\bibliography{b.bib}


\end{document}